\documentclass[a4paper,11pt,reqno]{amsart}

\usepackage[normalem]{ulem}
\usepackage{color}
\usepackage{amsmath}
\usepackage{amssymb}
\usepackage{array}
\usepackage{booktabs}
\usepackage{amsthm}
\usepackage[hidelinks]{hyperref}

\usepackage{enumitem}

\theoremstyle{plain}
\newtheorem{thm}{Theorem}[section]
\newtheorem{prop}[thm]{Proposition}
\newtheorem{cor}[thm]{Corollary}
\newtheorem{lem}[thm]{Lemma}

\theoremstyle{definition}
\newtheorem{defn}[thm]{Definition}

\theoremstyle{remark}
\newtheorem{rem}[thm]{Remark}

\usepackage{setspace}
\newcommand{\m}{\mathfrak{m}}

\newcommand{\R}{\mathbb{R}}

\def\onabla{\mathring \nabla}

\newcounter{mnotecount}[section]

\numberwithin{equation}{section}

\newtheorem{introthm}{\bf Theorem}

\title[]{A conformal reduction for the X-ADM mass}

\author[S. McCormick]{Stephen McCormick}
\address{Institutionen f\"or teknikvetenskap och matematik \\
	Lule{\aa} tekniska universitet \\
	971 87 Lule\aa \\
	Sweden} 
\email{stephen.mccormick@ltu.se}

\begin{document}
	
	\begin{abstract}		
		 The X-ADM mass is a geometric invariant for asymptotically flat manifolds, recently introduced by Mantegazza and Oronzio \cite{MantegazzaOronzio2026XADM}, generalising the weighted mass of Baldauf and Ozuch \cite{BaldaufOzuch22} which itself is a generalisation of the well-known ADM mass.
		 
		 We show that the positivity of the X-ADM mass is in fact equivalent to the standard positive mass theorem for the ADM mass by way of a conformal reduction argument, which in particular proves the X-positive mass theorem in all dimensions, which previously was only established in dimension $3$ under a topological condition. As a corollary, we obtain the Riemannian mass--charge inequality of general relativity in all dimensions. Finally, we prove a version of the X-positive mass theorem for a manifold with boundary.
 	\end{abstract}
	
	\maketitle
	
\section{Introduction}
The Riemannian positive mass theorem for asymptotically flat manifolds is a cornerstone of mathematical general relativity and a central result in scalar curvature geometry. It has a long history with many contributors, but the first proofs were by Schoen and Yau \cite{SchoenYau1979PositiveMass} using minimal surface techniques, and by Witten \cite{Witten81} using a spinor argument. Witten's proof holds in all dimensions for spin manifolds \cite{Bartnik86} while Schoen and Yau's proof does not impose a spin condition and holds up to dimension $7$ \cite{schoen1979complete}. There has been considerable work by many authors extending the positive mass theorem to higher dimensions \cite{bi2026proof,chodosh2025generic,chodosh2023generic,Lohkamp2006,SchoenYauAllDimensions}, culminating in the very recent work by Brendle and Wang \cite{BrendleWang2026} giving a full proof of the positive mass theorem in all dimensions.

 Roughly, the positive mass theorem says that the ADM mass of an asymptotically flat manifold with non-negative scalar curvature is non-negative, and zero only when the manifold is isometric to $\R^n$. Recently, Mantegazza and Oronzio \cite{MantegazzaOronzio2026XADM} introduced a generalisation of the ADM mass depending on a given vector field $X$ called the X-ADM mass. If $X$ is a gradient vector field, then the X-ADM mass reduces to the weighted mass of Baldauf and Ozuch \cite{BaldaufOzuch22}, and if $X$ vanishes it reduces to the standard ADM mass. The positivity of the X-ADM mass also implies the mass--charge inequality of general relativity, essentially unifying several different positive mass-type theorems. Mantegazza and Oronzio were able to prove a positive mass theorem for this X-ADM mass (see Theorem \ref{thm-intro1} below) in dimension $3$, assuming an additional topological condition.

In this article, we show that the X-positive mass theorem is equivalent to the standard positive mass theorem by means of a conformal reduction argument. In particular, positivity of the X-ADM mass follows directly from the standard positive mass theorem. The X-ADM mass of an asymptotically flat manifold equipped with a vector field $X$ is given in rectangular coordinates near infinity by
\begin{equation}
	\m_X(g)=\lim_{r\to \infty}\int_{S_r} \left(\partial_i g_{ij}-\partial_jg_{ii}+2X_j\right)\nu^j\,dS,
\end{equation}
where $S_r$ denotes a large sphere of radius $r$ in the exterior region and $\nu$ is the outward unit normal. We give precise definitions and conditions for this quantity to be well-defined and finite in the following section. A slightly simplified version of our main theorem is the following, wherein and throughout we denote the scalar curvature of a metric $g$ as $R(g)$. The reader is referred to Theorem \ref{thm-main1} for a precise statement.
\begin{introthm}\label{thm-intro1}[Theorem \ref{thm-main1}]
	Let $(M,g)$ be a smooth asymptotically flat $n$-manifold equipped with a vector field $X$ with suitable decay. 
	Suppose $R(g)+2\mathrm{div}(X)\in L^1$ and
	\begin{equation} \label{eq-introRcond}
		R(g)+2\mathrm{div}(X)\geq\frac{n-2}{n-1}|X|^2,
	\end{equation}
	then $\m_X(g)\geq0$ with equality if and only if $g=u^{4/(n-2)}g_{\mathbb R^n}$, $M\cong\mathbb R^n$, and $X=\nabla(\ln(u^{\frac{2(n-1)}{n-2}}))$, where $g_{\mathbb R^n}$ is (isometric to) the Euclidean metric.
\end{introthm}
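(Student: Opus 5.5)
The plan is to reduce the statement to the ordinary positive mass theorem by a conformal change. I will look for a positive function $u\to 1$ at infinity such that $\hat g=u^{4/(n-2)}g$ is asymptotically flat with $R(\hat g)\ge 0$ and $\m_X(g)\ge \m_{ADM}(\hat g)$. The positive mass theorem for $\hat g$ (in all dimensions, by Brendle--Wang) then gives $\m_X(g)\ge 0$.

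Write $c_n=\frac{4(n-1)}{n-2}$, $a=\frac{n-2}{n-1}$ and $Q=R(g)+2\mathrm{div}X-a|X|^2\ge 0$. The conformal formula is
\begin{equation*}
R(\hat g)\,u^{\frac{n+2}{n-2}}=-c_n\Delta u+R(g)u .
\end{equation*}
I will solve the linear equation
\begin{equation*}
-c_n\Delta u-\left(2\mathrm{div}X-a|X|^2\right)u=0,\qquad u\to 1 \text{ at infinity}.
\end{equation*}
For this $u$ we get $R(\hat g)=Q\,u^{-4/(n-2)}\ge 0$.

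The relevant quadratic form, after integrating by parts the $\mathrm{div}X$ term, is
\begin{equation*}
\int c_n|\nabla v|^2+4vX\cdot\nabla v+a|X|^2v^2=\int\Big|\sqrt{c_n}\,\nabla v+\tfrac{2}{\sqrt{c_n}}Xv\Big|^2 .
\end{equation*}
This uses the identity $4/a=c_n$. So the operator is nonnegative, but only borderline coercive. This is exactly why the constant $\frac{n-2}{n-1}$ in \eqref{eq-introRcond} appears, and the equality case $X=\frac{c_n}{2}\nabla\ln u$ matches the perfect square vanishing.

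Existence is the step I expect to be the main obstacle. Because of the borderline coercivity, the operator may fail to be invertible. I would first solve the perturbed problem with $a$ replaced by $a(1+\epsilon)$; equivalently, add $\epsilon|X|^2u$, or add a small positive compactly supported potential. That operator is coercive on the weighted space $\dot W^{1,2}$ via the Sobolev inequality and the decay of $X$. Standard Fredholm theory on weighted spaces then gives $u_\epsilon=1+O(r^{2-n})$, and positivity follows from a maximum-principle or minimisation argument.

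I then pass to the limit $\epsilon\to 0$ using uniform bounds. These come from the perfect-square identity applied to $v=u_\epsilon-1$, together with the fact that the square controls $\nabla(\ln u)$ up to $X$. If $u_\epsilon$ degenerated, a rescaling would give a nontrivial decaying solution of $\sqrt{c_n}\nabla v+\frac{2}{\sqrt{c_n}}Xv=0$, i.e.\ $v=e^{-\psi}$ with $\nabla\psi=\frac{2}{c_n}X$. Such a $v$ tends to a nonzero constant at infinity, which rules out degeneration. A fallback is to take $R(\hat g)\ge 0$ only in the limit and use the approximations' masses directly.

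For the mass comparison, the asymptotics give
\begin{equation*}
\m_{ADM}(\hat g)=\m_{ADM}(g)-2(n-1)\lim_{r\to\infty}\int_{S_r}\partial_\nu u\,dS .
\end{equation*}
Hence
\begin{equation*}
\m_X(g)-\m_{ADM}(\hat g)=\lim_{r\to\infty}\int_{S_r}\big(2X+2(n-1)\nabla u\big)\cdot\nu\,dS,
\end{equation*}
where $\nabla u$ can be replaced by $\nabla\ln u$ at infinity. I convert this flux into a bulk integral by the divergence theorem, substituting the equation for $\Delta u$. The expected outcome, after possibly adjusting the equation by the lower-order term $-c_n|\nabla u|^2/u$ (i.e.\ working with $\ln u$), is
\begin{equation*}
\m_X(g)-\m_{ADM}(\hat g)=\int_M a\,\Big|X-\tfrac{c_n}{2}\nabla\ln u\Big|^2 u^{\beta}\ge 0
\end{equation*}
for a suitable power $\beta$, plus the nonnegative term $\int Q\,(\cdot)$. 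I would fix the exact exponent of $u$ in the equation so that this identity closes; the boundary term at a compact core is absent since $M$ has no boundary.

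Rigidity follows from the same identity. If $\m_X(g)=0$, then $\m_{ADM}(\hat g)=0$, so $(M,\hat g)\cong\R^n$ by the rigidity part of the positive mass theorem. Moreover the square vanishes, so $X=\frac{c_n}{2}\nabla\ln u=\nabla\ln u^{\frac{2(n-1)}{n-2}}$. Rewriting $g$ in terms of $\hat g$ gives the stated form with $u^{-1}$ in place of $u$. Conversely, any such triple has $\m_X=0$ by direct computation.
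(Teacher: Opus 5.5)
Your route works, and it differs from the paper's in the choice of conformal factor. The paper first uses the perfect square and the coercivity estimate of Proposition \ref{prop-coerce} to show that $g$ has positive Yamabe type (Proposition \ref{prop-Yamabe}). It then quotes Maxwell to get a \emph{scalar-flat} $\widetilde g=\varphi^{4/(n-2)}g$, with potential $R(g)$, and arrives at \eqref{eq-massdefect}, where $R_X(g)\varphi^2$ appears explicitly in the bulk.

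You use the potential $-(2\mathrm{div}X-\frac{n-2}{n-1}|X|^2)$ instead. Then $R(\hat g)=R_X(g)u^{-4/(n-2)}\ge 0$, and the $R_X$ contribution is absorbed into $\m(\hat g)$. What this buys you: your operator's quadratic form is \emph{exactly} $\int c_n|\nabla v+\frac{2}{c_n}Xv|^2$. Once that form is coercive, you can solve by Lax--Milgram and weighted Fredholm theory without passing through Yamabe positivity and Maxwell's theorem, and the final identity is cleaner. What the paper's choice buys: $\varphi$ solves an equation whose coefficients come from $g$ alone, so $\widetilde g$ is smooth and the smooth positive mass theorem with rigidity applies verbatim even for $X\in W^{2,p}_{\delta-1}$. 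Your $u$ is only as regular as $\mathrm{div}X$, so for non-smooth $X$ you would need a low-regularity positive mass theorem, including rigidity.

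Several details in your write-up need correcting, though none breaks the approach.

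(i) For $\hat g=u^{4/(n-2)}g$ the flux correction is $-c_n\partial_\nu u$ with $c_n=\frac{4(n-1)}{n-2}$, not $-2(n-1)\partial_\nu u$. Also write $\m(\hat g)$ as a single limit, since $\m(g)$ need not exist. With the right constant, nothing needs adjusting. Multiplying your linear equation by $u$ and integrating over $B_r$ gives
\begin{equation*}
\int_{S_r}\big(c_n u\,\partial_\nu u+2u^2X\cdot\nu\big)\,dS=\int_{B_r}c_n\Big|\nabla u+\tfrac{2}{c_n}Xu\Big|^2\,dV .
\end{equation*}
Lemma \ref{lem-boundtrace} lets you set $u=1$ on $S_r$, so $\m_X(g)=\m(\hat g)+\int_M c_n|\nabla u+\frac{2}{c_n}Xu|^2\,dV$. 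In your notation this is $\beta=2$, with no separate $Q$-term.

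(ii) The square is $|X+\frac{c_n}{2}\nabla\ln u|^2$, not $|X-\frac{c_n}{2}\nabla\ln u|^2$. So equality gives $X=-\frac{c_n}{2}\nabla\ln u=\frac{c_n}{2}\nabla\ln(u^{-1})$, and this is the sign that makes your ``$u^{-1}$ in place of $u$'' consistent with the statement.

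(iii) The $\epsilon$-regularisation only relocates the difficulty. The uniform bound you need as $\epsilon\to0$ is essentially the coercivity estimate of Proposition \ref{prop-coerce}, and once you have it you can solve the unperturbed problem directly. Your blow-up sketch of that bound has two holes:
\begin{itemize}
\item Writing $v=e^{-\psi}$ with $\nabla\psi=\frac{2}{c_n}X$ presupposes $X$ is a gradient. For general $X$ you must first show that a nontrivial solution of $dv+\frac{2}{c_n}X^\flat v=0$ vanishes nowhere, e.g.\ by Gronwall along curves or by the argument of Lemma \ref{lem-ker}. Then $X$ is automatically exact, and $|v|$ is bounded below along rays, contradicting decay.
\item You must show the rescaled limit is nonzero. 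This uses the decay of $X$ and a Rellich-type compactness argument, as in the paper.
\end{itemize}
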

\begin{rem}
The proof of Theorem \ref{thm-intro1} in fact gives an expression (see \eqref{eq-massdefect}) for $\m_X$ from which positivity and rigidity follow directly. Namely we show
\begin{equation*}
	\m_X(g)=	\m(\varphi^{4/(n-2)}g)+\int_{M} c_n\left| \nabla\varphi+\frac{2}{c_n}X\varphi\right|^2+R_X(g)\varphi^2\,dV,	
\end{equation*}
where $\varphi$ is a conformal factor that ensures $\varphi^{4/(n-2)}g$ is scalar flat, $\m$ is the standard ADM mass, $c_n=\frac{4(n-1)}{n-2}$, and 
\begin{equation*}
	R_X(g)=R(g)+2\mathrm{div}(X)-\frac{n-2}{n-1}|X|^2.
\end{equation*}
\end{rem}

\begin{rem}
	Instead of condition \eqref{eq-introRcond}, Mantegazza and Oronzio considered the condition
	\begin{equation}\label{eq-MOcond}
		R(g)+2\mathrm{div}(X)\geq\left(1+\frac{1}{k}\right)|X|^2,
	\end{equation} 
	for $k\in(-\infty,-(n-1)]\cup(0,\infty)$, and specifically in the case $n=3$. However, this restriction on $k$ implies $(1+1/k)\geq\frac{n-2}{n-1}$, and therefore \eqref{eq-MOcond} implies \eqref{eq-introRcond}, so the X-positive mass theorem for all admissible values of $k$ follows from Theorem \ref{thm-intro1}.  
\end{rem}

By taking $X=\pm (n-1)E$ we see that Theorem \ref{thm-intro1} can be understood as an improved version of the well-known mass--charge inequality, which can be seen as a corollary of this theorem.
\begin{introthm}[Corollary \ref{cor-charge}]\label{thm-intromasscharge}
	Let $(M,g)$ be an asymptotically flat manifold equipped with a vector field $E$ with suitable decay, and $R(g), |\mathrm{div}(E)|\in L^1$ satisfy
	\begin{equation*}
		R(g)\geq 2(n-1)|\mathrm{div}(E)|+(n-1)(n-2)|E|^2.
	\end{equation*}
	Then
	\begin{equation*}
		\m(g)\geq |Q(E)|
	\end{equation*}
	where the ADM mass $\m(g)$ and total electric charge $Q(E)$ are given by
	\begin{equation}\begin{split}\label{eq-mE}
		\m(g)&=\lim_{r\to \infty}\int_{S_r} \left(\partial_i g_{ij}-\partial_jg_{ii}\right)\nu^j\,dS,\\ Q(E)&=\lim_{r\to\infty}\int_{S_r}2(n-1)E_i\nu^i\,dS.
	\end{split}\end{equation}
\end{introthm}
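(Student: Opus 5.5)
Following the remark after the statement, the plan is to apply Theorem~\ref{thm-intro1} twice, with $X=(n-1)E$ and with $X=-(n-1)E$. In each case we check that \eqref{eq-introRcond} holds, so that $\m_X(g)\ge 0$. Adding the two resulting inequalities will give $\m(g)\ge |Q(E)|$.

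First we compute the X-ADM mass for $X=\pm(n-1)E$. By definition,
\begin{equation*}
\m_{\pm(n-1)E}(g)=\m(g)\pm\lim_{r\to\infty}\int_{S_r}2(n-1)E_j\nu^j\,dS=\m(g)\pm Q(E).
\end{equation*}
This identity needs both limits to exist separately. The ADM mass limit exists under the standard hypotheses, since $R(g)\in L^1$ and $g$ is asymptotically flat. For the charge, the divergence theorem on an annulus together with $\mathrm{div}(E)\in L^1$ shows that the flux integrals $\int_{S_r}E\cdot\nu\,dS$ form a Cauchy family. A subtlety is that $\nu$ and $dS$ could be measured in $g$ or in the Euclidean metric. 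The difference between the two is $O(|g-\delta|\,|E|\,r^{n-1})$, which vanishes under the suitable decay assumed on $E$, for instance $|E|=O(r^{-(n-2)/2-\epsilon})$ together with the usual decay of $g-\delta$.

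Next we check the hypotheses of Theorem~\ref{thm-main1}. The decay of $X=\pm(n-1)E$ is inherited directly from that of $E$. Integrability also holds, because $R(g)+2\mathrm{div}(X)=R(g)\pm 2(n-1)\mathrm{div}(E)$ and both $R(g)$ and $\mathrm{div}(E)$ lie in $L^1$.

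For the curvature condition, we have $\frac{n-2}{n-1}|X|^2=(n-1)(n-2)|E|^2$. Therefore
\begin{equation*}
R(g)+2\mathrm{div}(X)-\tfrac{n-2}{n-1}|X|^2=R(g)\pm2(n-1)\mathrm{div}(E)-(n-1)(n-2)|E|^2.
\end{equation*}
Since $\pm 2(n-1)\mathrm{div}(E)\ge -2(n-1)|\mathrm{div}(E)|$, this quantity is at least
\begin{equation*}
R(g)-2(n-1)|\mathrm{div}(E)|-(n-1)(n-2)|E|^2,
\end{equation*}
which is nonnegative by hypothesis.

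Applying Theorem~\ref{thm-main1} in both cases then gives $\m(g)\pm Q(E)\ge 0$, that is, $\m(g)\ge |Q(E)|$.

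The only step that needs care is the bookkeeping of decay conditions. We must make sure that the precise decay hypotheses in Theorem~\ref{thm-main1} are satisfied by $\pm(n-1)E$, and that the X-ADM mass splits as the sum of two separately convergent limits. The algebra itself is immediate.
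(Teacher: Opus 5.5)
Your proposal is correct and is exactly the paper's argument: apply Theorem~\ref{thm-main1} with $X=\pm(n-1)E$, note that $R_X(g)\ge R(g)-2(n-1)|\mathrm{div}(E)|-(n-1)(n-2)|E|^2\ge 0$, and combine the two inequalities $\m(g)\pm Q(E)=\m_{\pm(n-1)E}(g)\ge 0$ (you combine them rather than add them, as your final line correctly does). One small slip: your illustrative rate $|E|=O(r^{-(n-2)/2-\epsilon})$ is one power too weak to make the error $O(|g-\mathring g|\,|E|\,r^{n-1})$ vanish, since you need $E=o(r^{-n/2})$; this is exactly what the paper's hypothesis $E\in W^{2,p}_{\delta-1}$ provides.
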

Note the normalisation we use for $\m$ and $Q$ differs from the physics literature by a dimensional constant. Theorem \ref{thm-intromasscharge} was first established in dimension $3$ by Gibbons and Hull \cite{GibbonsHull1982Bogomolny} and very recently Raulot established (the stronger initial data version of the) result in all dimensions $n\geq4$ under the assumption that $M$ is spin \cite{Raulot2025ChargedSpin}. As a consequence of Theorem \ref{thm-intro1}, we immediately obtain Theorem \ref{thm-intromasscharge} in all dimensions without the spin assumption.

We also obtain a version of the main theorem in the case where $M$ has a non-empty boundary. The precise statement is given by Theorem \ref{thm-boundary}, but the simplified version is as follows.
\begin{introthm}\label{thm-intro2}[Theorem \ref{thm-boundary}]
	Let $(M,g)$ be an asymptotically flat $n$-manifold equipped with a vector field $X=o(r^{-n/2})$. 
	Suppose $R(g)+2\mathrm{div}(X)\in L^1$,
	\begin{equation*}
		R(g)+2\mathrm{div}(X)\geq\frac{n-2}{n-1}|X|^2,
	\end{equation*}
	and
	\begin{equation*}
		H\leq X\cdot\nu\,\text{ on }\partial M
	\end{equation*}
	where $H$ is the mean curvature of $\partial M$ with respect to the inward unit normal $\nu$, then $\m_X(g)>0$.
\end{introthm}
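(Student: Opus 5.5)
We follow the conformal reduction behind Theorem \ref{thm-intro1}, now with a boundary term, and then invoke the Riemannian positive mass theorem for manifolds with boundary. Set $c_n=\frac{4(n-1)}{n-2}$. We seek a positive solution $\varphi$ of
\[
-c_n\Delta\varphi+R(g)\varphi=0 \ \text{ in } M,\qquad \varphi\to1 \text{ at infinity},
\]
with a boundary condition chosen so that $\tilde g=\varphi^{4/(n-2)}g$ is scalar flat and $\partial M$ has zero mean curvature in $\tilde g$. Under a conformal change the mean curvature transforms as $\tilde H=\varphi^{-n/(n-2)}\bigl(H\varphi-\tfrac{2(n-1)}{n-2}\partial_\nu\varphi\bigr)$, up to the sign convention for $\nu$ inward. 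The Robin condition is therefore $\frac{2(n-1)}{n-2}\partial_\nu\varphi=H\varphi$, i.e.\ $\frac{c_n}{2}\partial_\nu\varphi=H\varphi$. The positive mass theorem with minimal (or mean-convex in the appropriate sense) boundary then gives $\m(\tilde g)\ge 0$. This is known in all dimensions through the doubling or filling arguments combined with Brendle--Wang, and we would cite it as such.

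\textbf{Mass identity.} Next we reproduce the identity \eqref{eq-massdefect}. Multiply the equation by $\varphi$, integrate by parts over $M$ using the asymptotic expansion $\varphi=1+A r^{2-n}+\dots$ (so that $\m(\tilde g)=\m(g)-c_n$-multiples of the flux of $\nabla\varphi$), and add the flux of $2X$. Rewriting $R=R_X-2\mathrm{div}X+\frac{n-2}{n-1}|X|^2$, integrating $2\mathrm{div}(X)\varphi^2$ by parts and completing the square should give
\[
\m_X(g)=\m(\tilde g)+\int_M c_n\Bigl|\nabla\varphi+\tfrac{2}{c_n}X\varphi\Bigr|^2+R_X(g)\varphi^2\,dV+\int_{\partial M}\bigl(c_n\varphi\partial_\nu\varphi-2\varphi^2X\cdot\nu\bigr)\,dS.
\]
The signs here must be tracked against the inward normal. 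With the Robin condition, the boundary integrand becomes $2\varphi^2(H-X\cdot\nu)\le0$ or $\ge0$ depending on orientation. The hypothesis $H\le X\cdot\nu$ is exactly what should make the total boundary contribution nonnegative once the orientation of $\partial M$ as the inner boundary of the exterior region is accounted for. The decay $X=o(r^{-n/2})$ ensures $|X|^2\varphi^2$ and the cross terms are integrable and that the flux of $X\varphi^2$ at infinity matches that of $X$.

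\textbf{Solvability: the main obstacle.} The hardest step is solving the Robin problem with $\varphi>0$, since $R(g)$ and $H$ need not have a sign. The associated quadratic form is
\[
Q(\psi)=\int_M c_n|\nabla\psi|^2+R\psi^2\,dV-\int_{\partial M}2H\psi^2\,dS,
\]
and we need its coercivity on functions decaying at infinity. For this we rerun the completing-the-square computation. For compactly supported $\psi$,
\[
Q(\psi)=\int_M c_n\Bigl|\nabla\psi+\tfrac{2}{c_n}X\psi\Bigr|^2+R_X\psi^2\,dV+\int_{\partial M}2\psi^2(X\cdot\nu-H)\,dS\ \ge 0.
\]
Strict positivity plus a standard argument then gives an invertible operator: a perturbation $R_X+\epsilon$ on compacts, or a Fredholm alternative in weighted spaces, with the kernel excluded because $Q(\psi)=0$ would force $\nabla\psi=-\tfrac{2}{c_n}X\psi$ together with decay, hence $\psi\equiv0$. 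Positivity of $\varphi$ then follows from the maximum principle and Hopf lemma applied to the Robin problem, via the usual argument with the negative part $\varphi^-$, using $Q\ge0$.

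\textbf{Strict inequality.} Once $\varphi$ exists, $\m_X(g)\ge\m(\tilde g)+(\text{nonnegative terms})\ge0$. For strictness, suppose equality holds. Then $\m(\tilde g)=0$, and rigidity in the positive mass theorem would force $\tilde g$ to be Euclidean space, which has no boundary. This contradicts $\partial M\ne\emptyset$, so $\m_X(g)>0$.
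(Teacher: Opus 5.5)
This is essentially the paper's proof. The same completed square gives $\mathcal Y([g])>0$ with boundary term $2\int_{\partial M}(X\cdot\nu-H)\psi^2\,dS$, followed by a scalar-flat conformal metric with minimal boundary, the mass identity, and strictness from rigidity. The differences are minor: you sketch the existence of $\varphi$ yourself (Fredholm alternative, trivial kernel via the argument of Lemma~\ref{lem-ker}, positivity via $\varphi^-$), where the paper cites Maxwell. You should also replace $\varphi$ by $1$ in the fluxes at infinity using Lemma~\ref{lem-boundtrace}, not an expansion $\varphi=1+Ar^{2-n}+\dots$, which need not hold when $R(g)$ alone is not integrable.

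The signs you left open resolve as follows. With $\nu$ pointing to infinity and $H=\mathrm{div}_{\partial M}\nu$, one has $\tilde H=\varphi^{-n/(n-2)}(H\varphi+\tfrac{c_n}{2}\partial_\nu\varphi)$, so the Robin condition is $\tfrac{c_n}{2}\partial_\nu\varphi=-H\varphi$. Both boundary terms come from the single flux of $V=c_n\varphi\nabla\varphi+2\varphi^2X$, which satisfies $\mathrm{div}\,V=c_n|\nabla\varphi+\tfrac{2}{c_n}X\varphi|^2+R_X(g)\varphi^2$. Hence $\varphi\partial_\nu\varphi$ and $\varphi^2X\cdot\nu$ enter with the same sign, contrary to your display, and the boundary contribution is $\int_{\partial M}V\cdot\nu\,dS=2\int_{\partial M}(X\cdot\nu-H)\varphi^2\,dS\geq0$, as in the paper.
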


\begin{rem}
	Theorems \ref{thm-intro1} and \ref{thm-intro2} do not require that the scalar curvature be non-negative or integrable on its own, nor do they require that the ADM mass or the charge term are well-defined individually.
\end{rem}

\section*{Acknowledgments}
The author's research is supported by the Olle Engkvist Foundation, the Magnus Bergvall Foundation, and foundations managed by the Royal Swedish Academy of Sciences.

\section{Setup and definitions}
Throughout, we will always work on a smooth connected manifold $M$ of dimension $n\geq3$. Furthermore, we assume there exists a compact set $K\subset M$ and a diffeomorphism $\psi:M\setminus K\to \mathbb R^n\setminus \overline{B_1}$, where $\overline{B_1}$ denotes the closed unit ball. Fix some smooth background metric $\mathring g$ on $M$ agreeing with the pullback of the flat metric by $\psi$ on $M\setminus K$. It will be convenient to work in weighted Lebesgue and Sobolev spaces defined with respect to $\mathring g$, denoted by $L^{p}_\delta$ and $W^{k,p}_\delta$ respectively. We refer the reader to \cite{Bartnik86} for details on these spaces and weighted versions of various results for Sobolev spaces, but roughly $W^{k,p}_\delta$ can be thought of as functions with $W^{k,p}_{loc}$ regularity that behave like $o(r^\delta)$.

\begin{defn}\label{def-AF}
	We say $(M,g)$ is \textit{asymptotically flat} if $g-\mathring{g}\in W^{2,p}_\delta$ for some $p>\frac{n}{2}$ and $\delta\in(-(n-2),-\frac{n-2}{2}]$. \textit{We will assume $p$ and $\delta$ satisfy these restrictions throughout the entire document, for all weighted Sobolev and Lebesgue spaces used.}
\end{defn}

The ADM mass is a geometric quantity associated to an asymptotically flat manifold given by
\begin{equation}\label{eq-ADMdefn}
	\m(g)=\lim_{r\to\infty} \int_{S_r}\left( \onabla^ig_{ij}-\onabla_j(\mathring g^{ik}g_{ik})\right)\nu^j\,dS,
\end{equation}
where $\nu$ is the outward unit normal. Note that the definition often contains an additional multiplicative constant to agree with a physical definition of energy, which we omit here. A now well-known fact due to Bartnik \cite{Bartnik86} and Chru\'sciel \cite{Chrusciel1986} is that this quantity is a geometric invariant (independent of $\psi$). Furthermore, it is finite whenever the scalar curvature $R(g)$ is in $L^1$. The positive mass theorem states that if $R(g)\geq0$ then $\m(g)\geq0$ with equality if and only if $(M,g)$ is isometric to $(\mathbb R^n,\delta)$. Notice that we do not indicate which metric the surface element $dS$ is defined with respect to. This is because the asymptotics ensure it is independent of which asymptotically flat metric is used to define $dS$ and for this reason we omit it throughout.

Very recently, Mantegazza and Oronzio introduced the X-ADM mass \cite{MantegazzaOronzio2026XADM}. Given a vector field $X$ on $M$, the X-ADM mass is given by
\begin{equation}
	\m_X(g)=\lim_{r\to \infty}\int_{S_r} \left(\onabla^i g_{ij}-\onabla_j(\mathring{g}^{ik}g_{ik})+2X_j\right)\nu^j\,dS,
\end{equation}
which is well-defined and finite if $R(g)+2\mathrm{div}(X)\in L^1$ (see Proposition \ref{prop-welldefmE} below). Note that whenever we write $\mathrm{div}$ it is to be understood to be with respect to $g$. In \cite{MantegazzaOronzio2026XADM}, the X-positive mass theorem (Theorem \ref{thm-intro1}) was proven in dimension $3$ under an additional topological assumption.

\begin{rem}
	Baldauf and Ozuch introduced a weighted mass quantity for manifolds with density \cite{BaldaufOzuch22} in the sense of Perelman's work \cite{perelman2002entropy}, and provided a spinor proof of its positivity.	Letting $X=\nabla f$, where $f$ is the function defining their measure $e^{-f}dV_g$, the X-ADM mass is exactly their weighted mass. In this sense, the X-ADM mass is a generalisation of the weighted mass.
		
		A similar conformal reduction idea to the one we develop here was used by Law, Lopez and Santiago to provide another proof of the positivity of the weighted mass \cite{LawLopezSantiago2025WeightedMass}. However, their method was to find a conformal factor that transformed the weighted mass into the ADM mass, while here we instead transform to a scalar flat metric and show that the X-ADM mass is bounded below by the ADM mass of the conformal metric. Their conformal reduction can also be used to prove a weighted Riemannian Penrose inequality and other weighted versions of known results \cite{MeWeighted}, but this relies on explicit knowledge of the conformal factor at the boundary, which we do not have here. So we cannot obtain a Penrose inequality for the X-ADM mass by this method.
\end{rem}

\section{Existence of the conformal metric}

We prove Theorem \ref{thm-intro1} by making a conformal transformation to a scalar-flat metric and applying the classical positive mass theorem there. In order to do this, we first must show that such a conformal transformation exists. The resolution of the Yamabe problem in this setting was first claimed by \cite{CantorBrill1981Laplacian} and corrected by Maxwell \cite{Maxwell2005ApparentHorizonBoundaries} (see also \cite{DiltsMaxwell2018YamabeAE}). This states that an asymptotically flat metric $g$ is conformal to a scalar flat asymptotically flat metric if and only if $g$ is of positive Yamabe type. That is to say, the Yamabe constant
\begin{equation}
	\mathcal Y([g])=\inf_{f\in C^\infty_c(M),f\not\equiv0}\frac{\int_M \frac{4(n-1)}{n-2}|\nabla f|^2+R(g)f^2\,dV_g}{\|f\|^2_{L^{2n/(n-2)}}},
\end{equation}
is positive. 

We show that the scalar curvature condition in the X-positive mass theorem suffices to ensure positive Yamabe type.
\begin{prop}\label{prop-Yamabe}
	Let $(M,g)$ be an asymptotically flat manifold in the sense of Definition \ref{def-AF} and let $X\in W^{2,p}_{\delta-1}$ be a vector field on $M$. Suppose
	\begin{equation}
		R(g)+ 2\mathrm{div}(X)\geq\frac{n-2}{n-1}|X|^2,
	\end{equation}
	then $g$ is positive Yamabe type.
\end{prop}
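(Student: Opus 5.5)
The plan is to show that the numerator of the Yamabe quotient dominates a positive multiple of $\int|\nabla f|^2$, and then to use the Sobolev inequality on asymptotically flat manifolds to bound this from below by $\|f\|^2_{L^{2n/(n-2)}}$.

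Fix $f\in C^\infty_c(M)$ with $f\not\equiv0$, and write $c_n=\frac{4(n-1)}{n-2}$. We use the hypothesis to replace $R(g)$ by $-2\mathrm{div}(X)+\frac{n-2}{n-1}|X|^2$, which gives
\begin{equation*}
\int_M c_n|\nabla f|^2+R(g)f^2\,dV\geq \int_M c_n|\nabla f|^2-2\mathrm{div}(X)f^2+\frac{n-2}{n-1}|X|^2f^2\,dV .
\end{equation*}
Since $f$ has compact support, integrating by parts gives $-\int 2\mathrm{div}(X)f^2=\int 4f\,X\cdot\nabla f$. Note that $\frac{n-2}{n-1}=\frac{4}{c_n}$. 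The right-hand side is therefore
\begin{equation*}
\int_M c_n|\nabla f|^2+4fX\cdot\nabla f+\frac{4}{c_n}|X|^2f^2\,dV=\int_M c_n\left|\nabla f+\frac{2}{c_n}Xf\right|^2dV,
\end{equation*}
which is the same perfect square that appears in the mass formula of the Remark. This shows the numerator is non-negative, but we need strict positivity of the infimum of the quotient, and this is the main obstacle.

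To handle it, I would keep a fraction of the gradient term in reserve. For $\epsilon\in(0,1)$, apply the hypothesis only to the part $(1-\epsilon)R(g)f^2$ and leave $\epsilon R(g)f^2$ aside. The same computation with $c_n$ replaced by $(1-\epsilon)c_n$ produces a cross term $(1-\epsilon)4fX\cdot\nabla f$, and completing the square costs at most a constant times $\epsilon\int|X|^2f^2$. The numerator is then bounded below by
\begin{equation*}
\epsilon c_n\int|\nabla f|^2-\epsilon\int|R(g)|f^2-C\epsilon\int|X|^2f^2 .
\end{equation*}
These potential terms are not pointwise small, so as stated this does not close.

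The cleaner route, which I would actually pursue, is the conformal substitution $f=e^{-\phi}h$ with a weight $\phi$ chosen so that $\nabla f+\frac{2}{c_n}Xf$ becomes $e^{-\phi}\nabla h$. This only works if $X$ is a gradient, so for general $X$ I would argue by contradiction instead.

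Suppose $\mathcal Y([g])\leq0$. By Maxwell's characterisation, the failure of positive Yamabe type means the operator $-c_n\Delta+R$ is not positive in the weighted sense. Equivalently, there is a sequence $f_k$ with $\|f_k\|_{L^{2n/(n-2)}}=1$ and $\int c_n|\nabla f_k+\frac{2}{c_n}Xf_k|^2\to$ a value $\leq0$, so $\nabla f_k+\frac{2}{c_n}Xf_k\to0$ in $L^2$. The decay $X\in W^{2,p}_{\delta-1}$ implies $X\in L^n$ near infinity, since $\delta-1<-1$. Hence $\|Xf_k\|_{L^2(M\setminus K_R)}\leq\|X\|_{L^n(M\setminus K_R)}\|f_k\|_{L^{2n/(n-2)}}$ is small for large $R$. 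This controls $\|\nabla f_k\|_{L^2}$ outside a large compact set, and by the weighted Sobolev inequality it gives $L^{2n/(n-2)}$ control there. Mass therefore cannot escape to infinity. Local compactness then yields a nonzero limit $f\in W^{1,2}_{loc}$, decaying in the weighted sense, with $\nabla f=-\frac{2}{c_n}Xf$. This is a first-order linear ODE along curves, so $|f|$ is either nowhere zero or identically zero. A nonzero $f$ would have $|f|$ bounded below on rays out to infinity, because $X$ is integrable along rays ($|X|=o(r^{-1})$ is not quite integrable, but $\delta-1<-n/2$ gives enough decay for $n\geq3$). This contradicts $f\in L^{2n/(n-2)}$, so positive Yamabe type follows.
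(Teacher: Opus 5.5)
The gap is in the compactness step. Your outline otherwise matches the paper's: the same perfect square $c_n|\nabla f+\frac{2}{c_n}Xf|^2$, then a contradiction/compactness proof of $\|f\|_{2^*}\le C\|\nabla f+\frac{2}{c_n}Xf\|_{2}$ (Proposition \ref{prop-coerce} with $\delta=-\frac{n-2}{2}$), with triviality of the kernel as the key input.

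The sentence that fails is ``Local compactness then yields a nonzero limit.'' Your estimate $\limsup_k\|f_k\|_{L^{2^*}(E_R)}\le C\|X\|_{L^n(E_R)}$ only rules out loss of $L^{2^*}$ mass at infinity. Since $2^*$ is the critical exponent, Rellich gives strong convergence in $L^q_{loc}$ only for $q<2^*$. So $\|f_k\|_{L^{2^*}(B_R)}\ge\frac12$ is still compatible with $f_k\rightharpoonup 0$: a bounded $W^{1,2}$ sequence can concentrate at a point, as Sobolev bubbles do, and vanish in $L^2_{loc}$. Without a nonzero limit your kernel argument has nothing to act on, so the contradiction is not yet reached.

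The fix uses the equation once more. This is exactly the role of \eqref{eq-scalebroke} in the paper, which gets it from the compact embedding $W^{1,2}_\delta\hookrightarrow L^2_{\delta+\varepsilon}$. After a diagonal subsequence, $f_k\to f$ in $L^2_{loc}$ and weakly in $L^{2^*}$. Then $Xf_k\to Xf$ in $L^2(M)$: on $B_R$ because $X$ is bounded there, and on $E_R$ because $\|X(f_k-f)\|_{L^2(E_R)}\le 2\|X\|_{L^n(E_R)}$. Hence $\nabla f_k=(\nabla f_k+\frac{2}{c_n}Xf_k)-\frac{2}{c_n}Xf_k$ converges strongly in $L^2$, necessarily to $\nabla f$. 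The Sobolev inequality $\|u\|_{2^*}\le C(\|\nabla u\|_2+\|u\|_{L^2(B_R)})$ (exterior inequality on $E_R$ plus local Sobolev on $B_R$) then gives $f_k\to f$ in $L^{2^*}$. So $\|f\|_{2^*}=1$ and $\nabla f=-\frac{2}{c_n}Xf$.

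From there your ODE argument is a valid and more elementary replacement for Lemma \ref{lem-ker}. $X$ is H\"older continuous, so $f$ bootstraps to $C^1$ and is nowhere zero. It stays bounded below along rays because $|X|=o(r^{\delta-1})$ with $\delta-1\le-\frac n2<-1$; note this is $\le$, not $<$, since $\delta=-\frac{n-2}{2}$ is allowed. That contradicts $f\in L^{2^*}$. Your route uses continuity and pointwise decay of $X$, whereas the paper's absorption-and-covering argument needs only $X\in L^n$.

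The $\epsilon$-splitting and conformal-substitution detours, and the appeal to Maxwell, can be dropped. The perfect square already gives $\mathcal Y([g])\ge 0$, and the minimizing sequence comes straight from the definition of the infimum.
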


In order to establish this, we first need the following coercivity estimate.
\begin{prop}\label{prop-coerce}
	Fix some one-form $A\in L^\infty_{-1-\varepsilon}$ and consider the operator
	\begin{equation*}
		Lu= du+Au.
	\end{equation*}
	
	For $u\in W^{1,2}_{\delta}$, we have
	\begin{equation}\label{eq-coerce}
		\|u\|_{2^*,\delta}\leq C\|Lu\|_{2,\delta-1},
	\end{equation}
	where $2^*=\frac{2n}{n-2}$ and $C$ depends only on $g$ and $A$.
\end{prop}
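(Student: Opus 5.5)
The plan is the standard weighted-space argument by contradiction: reduce to the weighted Sobolev inequality for $d$, and use an injectivity statement for $L$ to rule out a concentrating sequence. First, recall the weighted Sobolev inequality of Bartnik \cite{Bartnik86}: for $u\in W^{1,2}_\delta$ with $\delta$ in our range (in particular $\delta<0$, hence $\delta>-(n-2)$ and $\delta\neq$ an exceptional weight),
\begin{equation*}
	\|u\|_{2^*,\delta}\leq C\|du\|_{2,\delta-1}.
\end{equation*}
This already covers $A=0$, and for general $A$ it gives
\begin{equation*}
	\|u\|_{2^*,\delta}\leq C\|Lu\|_{2,\delta-1}+C\|Au\|_{2,\delta-1}.
\end{equation*}
So it suffices to control the lower order term $\|Au\|_{2,\delta-1}$ by $\|Lu\|_{2,\delta-1}$.

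Argue by contradiction. Suppose there are $u_k\in W^{1,2}_\delta$ with $\|u_k\|_{2^*,\delta}=1$ and $\|Lu_k\|_{2,\delta-1}\to0$. By the displayed estimate, $\|du_k\|_{2,\delta-1}\leq \|Lu_k\|_{2,\delta-1}+\|Au_k\|_{2,\delta-1}$. Next, since $A\in L^\infty_{-1-\varepsilon}$, we have $|A|\leq Cr^{-1-\varepsilon}$. Hence $\|Au_k\|_{2,\delta-1}\leq C\|u_k\|_{2,\delta-\varepsilon}$, and by Hölder on the weighted spaces this is bounded by $C\|u_k\|_{2^*,\delta}$. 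The extra decay $r^{-\varepsilon}$ makes the weight $r^{-\varepsilon}\cdot r^{-1}$ square-integrable against $r^{-n}$ in the appropriate $L^n$ sense. Thus $u_k$ is bounded in $W^{1,2}_\delta$.

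The key compactness step is Bartnik's compact embedding $W^{1,2}_\delta\hookrightarrow L^2_{\delta'}$ for $\delta'>\delta$. Taking $\delta'=\delta-1+\varepsilon/2$ or similar, multiplication by $A$ is a compact map $W^{1,2}_\delta\to L^2_{\delta-1}$. The decay gain near infinity handles the tail, and Rellich handles compact sets. Passing to a subsequence, $Au_k\to Au_k$-limit strongly, and $u_k\rightharpoonup u$ weakly in $W^{1,2}_\delta$. Then
\begin{equation*}
	du_k=Lu_k-Au_k
\end{equation*}
is Cauchy in $L^2_{\delta-1}$. By the Sobolev inequality $u_k$ is Cauchy in $L^{2^*}_\delta$, so $u_k\to u$ with $\|u\|_{2^*,\delta}=1$ and $Lu=du+Au=0$.

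The main obstacle is the final injectivity step: showing $du+Au=0$ with $u\in W^{1,2}_\delta$ forces $u=0$. Locally $u$ solves a first-order linear ODE along curves, so the set where $u\neq0$ is open. There $d\ln|u|=-A$, and since $|u|$ is continuous by the bootstrap $u\in W^{1,p}_{loc}$ with $A$ bounded, unique continuation along paths gives the following. If $u$ vanished at a point it would vanish on a neighbourhood, so by connectedness either $u\equiv0$ or $u$ never vanishes. In the nonvanishing case, integrate along radial rays in the end: $|\ln|u(r\theta)|-\ln|u(r_0\theta)||\leq\int_{r_0}^r Cs^{-1-\varepsilon}ds<\infty$. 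So $|u|$ is bounded below by a positive constant near infinity, contradicting $u=o(r^\delta)$ with $\delta<0$, that is, $u\in L^{2^*}_\delta$. Hence $u\equiv0$, contradicting $\|u\|_{2^*,\delta}=1$ and proving \eqref{eq-coerce}. The constant depends on $g$ only through the equivalence of $g$ and $\mathring g$, and on $A$.
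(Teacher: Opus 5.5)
Your argument is correct in outline, and its outer layer matches the paper's. You normalise $\|u_k\|_{2^*,\delta}=1$ with $Lu_k\to0$, bound $u_k$ in $W^{1,2}_\delta$, and use the extra decay of $A$ with the weighted Rellich theorem to get strong convergence in $L^{2^*}_\delta$, contradicting injectivity of $L$. You extract a unit-norm limit in $\ker L$, whereas the paper first identifies the weak limit as $0$ and then shows $u_k\to0$ strongly; this is the same argument.

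The indices need fixing. Since $|A|\le Cr^{-1-\varepsilon}$, one has $\|Au\|_{2,\delta-1}\le\|A\|_{\infty,-1-\varepsilon}\|u\|_{2,\delta+\varepsilon}$, not $\|u\|_{2,\delta-\varepsilon}$. The latter is not controlled by $\|u\|_{2^*,\delta}$. The compact embedding to use is $W^{1,2}_\delta\hookrightarrow L^2_{\delta+\varepsilon}$, exactly as in the paper. Your choice $\delta'=\delta-1+\varepsilon/2$ lies below $\delta$ for small $\varepsilon$, so that embedding is not available.

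Also, on $M$ (as opposed to $\R^n$ or an exterior region), the global bound $\|u\|_{2,\delta}+\|u\|_{2^*,\delta}\le C\|du\|_{2,\delta-1}$ is itself the case $A=0$ of the statement. The paper avoids needing it by using Poincar\'e only on $E_R$ and controlling $\|u\|_{2;B_R}$ by local compactness, and you can do the same.

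The genuinely different part is the kernel step, Lemma~\ref{lem-ker}. The paper uses Sobolev absorption. On $E_R$, the bound $\|u\|_{2^*,\delta;E_R}\le C\|A\|_{n,-1;E_R}\|u\|_{2^*,\delta;E_R}$ with $\|A\|_{n,-1;E_R}\to0$ kills $u$ near infinity. The same absorption on balls where $\|A\|_{n;B_\rho}$ is uniformly small then propagates the vanishing inward.

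You instead read $du=-Au$ as an ODE. You bootstrap to local Lipschitz continuity, show $\{u\neq0\}$ is open and closed via a Gronwall bound on $\ln|u|$, and then integrate $|d\ln|u||\le Cr^{-1-\varepsilon}$ along rays. This bounds $|u|$ below at infinity, which is incompatible with $u\in L^{2^*}_\delta$ for $\delta<0$. The argument is valid and more transparent: a nonzero solution is nowhere vanishing and comparable to a constant at infinity.

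However, it uses the full hypothesis $A\in L^\infty_{-1-\varepsilon}$ twice: local boundedness for the bootstrap, and integrability of $r^{-1-\varepsilon}$ along rays. Since $A$ is only measurable, the Gronwall and ray estimates must be done on almost every segment (Fubini) and then extended by continuity. The paper's absorption argument needs only $A\in L^n_{-1}$ and no pointwise regularity of $u$ at all.
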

This estimate is natural but an explicit statement does not appear available in the literature so we prove it here, mostly following ideas from \cite{Bartnik2005PhaseSpace}.

We begin with the following lemma.
\begin{lem}\label{lem-ker}
	If $u\in W^{1,2}_\delta$ satisfies
	\begin{equation*}
		Lu=0
	\end{equation*}
	in the weak sense, then $u\equiv 0$.
\end{lem}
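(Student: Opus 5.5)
We argue in two steps: first we show $u$ is locally bounded away from zero or identically zero, and then we use the decay at infinity to force $u\equiv 0$.

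\emph{Regularity.} The equation $du=-Au$ with $A\in L^\infty_{loc}$ and $u\in W^{1,2}_{loc}$ gives $du\in L^2_{loc}$ times bounded, so $u\in W^{1,2}_{loc}$ with $|du|\le |A||u|$. Bootstrapping through Sobolev embedding, $u\in L^{q}_{loc}$ for $q$ increasing, so $du\in L^q_{loc}$, and eventually $u\in W^{1,q}_{loc}$ for $q>n$. Hence $u$ is continuous, indeed locally Lipschitz.

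\emph{Unique continuation.} Fix a point $x_0$ and a path $\gamma:[0,1]\to M$ from $x_0$ to any other point $x$, and set $f(t)=u(\gamma(t))$. This is absolutely continuous, since $u$ is Lipschitz; more carefully, one works with smooth approximations or uses that $W^{1,q}$, $q>n$, restricts well along a.e.\ nearby curve. It satisfies $|f'|\le |A||\dot\gamma||f|$, so Gr\"onwall gives
\[
|f(1)|\le e^{\int|A||\dot\gamma|}|f(0)|, \qquad |f(0)|\le e^{\int|A||\dot\gamma|}|f(1)|.
\]
A cleaner alternative is to note that locally $A$ is closed in the weak sense, since $0=d(du)=d(Au)=dA\,u-A\wedge du=(dA)u$. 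Then $\log|u|$ satisfies $d\log|u|=-A$ wherever $u\neq0$, and this set is open and closed by the Gr\"onwall bound. Either way, since $M$ is connected, $u$ either vanishes identically or vanishes nowhere. I expect this to be the only delicate point, because $A$ is only $L^\infty$ and curve restrictions need care. The Gr\"onwall estimate along a family of parallel segments in a coordinate chart, combined with Fubini, handles this.

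\emph{Decay at infinity.} Suppose $u$ vanishes nowhere. In the asymptotic chart, take a point $x$ with $|x|=r$ and integrate along the radial ray from $x$ out to infinity. We have $|A|\le C|x|^{-1-\varepsilon}$, so $\int_r^\infty|A|\,ds\le C r^{-\varepsilon}<\infty$. Gr\"onwall then bounds $|u|$ along the ray from below by $c|u(x)|>0$, uniformly out to infinity. Hence on the exterior region $|u|$ is bounded below along rays, and the set where $|u|\ge c_0>0$ contains a cone-like positive measure portion of every annulus.

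This contradicts $u\in W^{1,2}_\delta$ with $\delta<0$. The weighted $L^2$ norm $\int |u|^2 r^{-2\delta-n}\,dx$ diverges if $|u|\ge c_0$ on a set of rays of positive solid angle, because $-2\delta-n+n-1=-2\delta-1>-1$, so the radial integral diverges. Actually, one must use the precise meaning of $o(r^\delta)$: the Bartnik weighted Sobolev embedding for $p>n$ gives $u=o(r^\delta)\to0$ pointwise, which contradicts $|u|\ge c_0$ directly. Therefore $u\equiv0$.
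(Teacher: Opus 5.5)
Your proof is correct, but it runs in the opposite direction from the paper's.

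\textbf{The paper's route.} It stays at the level of integral norms. On an exterior region $E_R$ it combines the weighted Sobolev and Poincar\'e inequalities with $du=-Au$ and weighted H\"older to get $\|u\|_{2^*,\delta;E_R}\le C\|A\|_{n,-1;E_R}\|u\|_{2^*,\delta;E_R}$. Since $\|A\|_{n,-1;E_R}\to0$, $u$ vanishes outside a large compact set. It then propagates the vanishing inward by the same absorption trick on small balls where $\|A\|_{L^n}$ is small.

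\textbf{Your route.} You bootstrap to $u\in W^{1,\infty}_{loc}$ and use Gr\"onwall to get the dichotomy: $u$ vanishes identically or nowhere. You then exclude the second case by integrating along radial rays. Since $\int_r^\infty s^{-1-\varepsilon}\,ds<\infty$, $|u|$ stays bounded below on all of $\{|x|\ge r\}$ (by compactness of $S_r$, not just on a cone). This contradicts $\int|u|^2r^{-2\delta-n}\,dx<\infty$ because $\delta<0$.

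\textbf{What each buys.} Your argument is more elementary and pointwise; it essentially integrates $d\log|u|=-A$. But it genuinely uses the local boundedness of $A$ and the integrability of $|A|$ along rays, which is exactly where the rate $-1-\varepsilon$ enters. With only $|A|\le Cr^{-1}$, Gr\"onwall gives merely $|u|\gtrsim r^{-C'}$, which need not contradict $u\in L^2_\delta$. The paper's argument needs only $A\in L^n_{-1}$, as remarked after its proof. It uses no regularity of $u$ beyond $W^{1,2}_\delta$ and carries over to the boundary case with half balls.

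\textbf{Simplifying the delicate step.} You can avoid restricting to curves entirely. If $u(x_0)=0$ and $B$ is a convex coordinate ball of small radius $\rho$ about $x_0$, then
\[
\sup_B|u|\le C\rho\,\|du\|_{L^\infty(B)}\le C\rho\,\sup_B|A|\,\sup_B|u|.
\]
For $\rho$ small this forces $u\equiv0$ on $B$, so the zero set is open.

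\textbf{One correction.} Drop your closing appeal to $W^{1,p}_\delta\subset C^0_\delta$ with $p>n$. The function $u$ is only assumed to lie in $W^{1,2}_\delta$, so this would need an extra weighted bootstrap. It is also unnecessary: the divergence of the $L^2_\delta$ integral, which you computed correctly, already finishes the proof.
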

\begin{proof}
	In this proof and indeed throughout this entire section we will use $C$ to denote a constant that may change from line to line.
	
	We first work on an exterior region $E_R=M\setminus B_R$ for some large $R$. Recall the weighted Sobolev inequality \cite[Theorem 1.2]{Bartnik86}
	\begin{equation*}
		\|u\|_{2^*,\delta;E_R}\leq C\| u \|_{1,2,\delta;E_R}
	\end{equation*}
	and the weighted Poincar\'e inequality \cite[Lemma 3.10]{Bartnik2005PhaseSpace},
	\begin{equation*}
		\|u\|_{2,\delta;E_R}\leq C \|Du\|_{2,\delta-1;E_R}.
	\end{equation*}
	
	Combining these gives
	\begin{equation}
		\|u\|_{2^*,\delta;E_R}\leq C\| Du \|_{2,\delta-1;E_R}.
	\end{equation}
	From this we can prove that any $u\in W^{1,2}_\delta$ satisfying $Lu=0$ must vanish identically on $E_R$ for sufficiently large $R$. Assume $du+Au=0$ (in the weak sense) then we have
	\begin{equation}
		\| u \|_{2^*,\delta;E_R}\leq C\|Au\|_{2,\delta-1;E_R}\leq  C\|A\|_{n,-1;E_R}\| u\|_{2^*,\delta;E_R},
	\end{equation}
	where we also made use of the weighted H\"older inequality \cite[Theorem 1.2]{Bartnik86}. Now, since $C$ is independent of $R$ and $\|A\|_{n,-1;E_R}\to0$ as $R\to\infty$, we can choose $R$ large enough to ensure
	\begin{equation*}
		\| u \|_{2^*,\delta;E_R}\leq \frac12\| u\|_{2^*,\delta;E_R},
	\end{equation*}
	and therefore $u\equiv0$ on $E_R$. To prove that $u$ vanishes everywhere, we next show that there is a radius $\rho$ such that whenever $u$ vanishes on a ball of radius $\rho/2$ then it vanishes on a ball of radius $\rho$ centred at the same point (cf. \cite[Lemma 3.8]{Bartnik2005PhaseSpace}). Let $B_\rho$ denote the ball of radius $\rho$ centred at some fixed point with $u\equiv0$ on $B_{\rho/2}$, with $\rho$ taken sufficiently small that we can work in a local coordinate chart. For such a $u$, the standard Sobolev inequality holds (specifically we can apply Lemma 3.7 of \cite{Bartnik2005PhaseSpace}),
	\begin{equation*}
		\|u\|_{2^*;B_\rho}\leq c\|Du\|_{2;B_\rho}=c\|Au\|_{2;B_\rho}\leq c\|A\|_{n;B_\rho}\|u\|_{2^*;B_\rho},
	\end{equation*}
	where $c$ denotes a constant that can be chosen independent of $\rho$. Choosing $\rho=\rho_0$ small enough to ensure $c\|A\|_{n;B_{\rho_0}}<1$ ensures $u$ vanishes on $B_{\rho_0}$. Since $A\in L^n_{-1}(M)=L^n(M)$ and $u$ vanishes on $E_R$, $\rho_0$ can be chosen independent of the point where $B_\rho$ is centred. Covering $M\setminus E_{2R}$ by balls of radius $\rho_0$, we conclude that $u\equiv0$ on each small ball and therefore on all of $M$.
\end{proof} 
Note that in the preceding lemma, we only require $A\in L^n_{-1}$ rather than $L^\infty_{-1-\varepsilon}$. Furthermore, the argument holds even when a boundary is present, replacing balls with half balls intersecting the boundary where necessary.

We now use Lemma \ref{lem-ker} to prove Proposition \ref{prop-coerce}.
\begin{proof}[Proof of Proposition \ref{prop-coerce}]
	We prove \eqref{eq-coerce} by contradiction. To this end, we assume there exists a sequence $u_i\in W^{1,2}_\delta$ such that $\|u_i\|_{2^*,\delta}=1$ and $Lu_i\to0$ in $L^{2}_{\delta-1}$.
	
	We then have
	\begin{align}
		\| Du_i \|_{2,\delta-1}&\leq \|Lu_i\|_{2,\delta-1}+\|Au_i\|_{2,\delta-1}\nonumber \\
		&\leq \|Lu_i\|_{2,\delta-1}+\|A\|_{n,-1}\|u_i\|_{2^*,\delta}\\
		&\leq C,\nonumber
	\end{align}
	where we make use of the weighted H\"older inequality again. Combining this with the weighted Poincar\'e inequality on an exterior region $E_R=M\setminus \overline{B_R}$ then gives
	\begin{align*}
		\|u_i\|_{1,2,\delta}\leq C + \|u_i\|_{2;B_R}\leq C,
	\end{align*}
	
	where we also made use of $\|u_i\|_{2^*,\delta}=1$. Since $u_i$ is uniformly bounded in $W^{1,2}_\delta$, after passing to a subsequence if necessary, we have $u_i\rightharpoonup u\in W^{1,2}_\delta$ weakly. Furthermore, since $Lu_i\to 0$ in $L^{2}_{\delta-1}$ and $L$ is bounded and linear, we have $u\equiv0$. If we can show strong convergence of $u_i\to u$ in $L^{2^*}_\delta$ then we get a contradiction with $\|u_i\|_{2^*,\delta}=1$. Here is where we use $A\in L^\infty_{-1-\varepsilon}$, as we have
	
	\begin{align}\begin{split}
			\|u_i\|_{2^*,\delta}&\leq C \| u_i \|_{1,2,\delta}\\
			&\leq C\|Lu_i\|_{2,\delta-1}+C\|A\|_{\infty,-1-\varepsilon}\|u_i\|_{2,\delta+\varepsilon}+\|u_i\|_{2;B_R}\\
			&\leq C\left( \|Lu_i\|_{2,\delta-1}+\|u_i\|_{2,\delta+\varepsilon}\right).\label{eq-scalebroke}		\end{split}
	\end{align}
	By the weighted Rellich compactness theorem \cite[Lemma 2.1]{choquet1981elliptic} $W^{1,2}_\delta$ embeds compactly in $L^{2}_{\delta+\varepsilon}$ so in particular, \eqref{eq-scalebroke} shows $u_i\to 0$ in $L^{2^*}_\delta$ strongly, giving a contradiction.
	
\end{proof}

Now we are finally in a position to prove Proposition \ref{prop-Yamabe}.

\begin{proof}[Proof of Proposition \ref{prop-Yamabe}]
	By direct computation we have for any $f\in C^\infty_c$,
	\begin{align*}
		\int_M &\frac{4(n-1)}{n-2}|\nabla f|^2+R(g)f^2\,dV_g\geq \\
		 &\geq \int_M \frac{4(n-1)}{n-2}|\nabla f|^2-\left( 2\mathrm{div}(X)-\frac{n-2}{n-1}|X|^2 \right)f^2\,dV\\
	&=\int_M \frac{4(n-1)}{n-2}|\nabla f|^2+4fX\cdot\nabla f+\frac{n-2}{n-1}|X|^2f^2\,dV\\
	&=\int_M \frac{4(n-1)}{n-2}\left|\nabla f +\frac{n-2}{2(n-1)}Xf   \right|^2\,dV.
	\end{align*}

	Now note that $W^{2,p}_{\delta-1}\subset L^\infty_{-1-\varepsilon}$ so Proposition \ref{prop-coerce} applies to the operator $L=d+\frac{n-2}{2(n-1)}X^\flat$. In particular, we apply Proposition \ref{prop-coerce} with $\delta=-(n-2)/2$ and then since the weighted space $L^{2^*}_{-(n-2)/2}$ is equal to the unweighted space $L^{2^*}$, we have
	\begin{equation*}
		\|f\|^2_{2^*}\leq C \int_M \frac{4(n-1)}{n-2}\left|\nabla f +\frac{n-2}{2(n-1)}Xf   \right|^2\,dV.
	\end{equation*}
	From this, we immediately see $\mathcal Y([g])>0$.
	
\end{proof}

\section{Proof of the main theorem}
We first quickly show that the X-ADM mass is well-defined under an appropriate integrability condition.
\begin{prop}\label{prop-welldefmE}
	For any asymptotically flat manifold $(M,g)$ and vector field $X\in W^{2,p}_{\delta-1}$ satisfying
	\begin{equation*}
		R(g)+2\mathrm{div}(X)\in L^1,
	\end{equation*}
	$\m_X(g)$ is well-defined and finite.
\end{prop}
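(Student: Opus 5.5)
The plan is to adapt Bartnik's argument for the ADM mass to the modified integrand. Write the scalar curvature in the exterior region as
\[
R(g)=\onabla^j\big(\onabla^i g_{ij}-\onabla_j(\mathring g^{ik}g_{ik})\big)+Q,
\]
where $Q$ is quadratic in $g-\mathring g$. It consists of terms like $(g^{-1}-\mathring g^{-1})\onabla^2 g$ and $\onabla g * \onabla g$, up to terms that are themselves divergences, following \cite{Bartnik86}. The decay $g-\mathring g\in W^{2,p}_\delta$ with $\delta\leq-\frac{n-2}{2}$ gives $Q\in L^1$ on the exterior region, by the weighted H\"older inequality, since $2\delta-2\leq -n$.

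Next handle $\mathrm{div}(X)$, which is computed with respect to $g$. Write $\mathrm{div}_g X=\onabla_jX^j+\Gamma*X$, where $\Gamma$ is the difference of connections, so $\Gamma\sim\onabla g\in W^{1,p}_{\delta-1}$. The error term $\Gamma * X$ lies in $L^1$ because $X\in W^{2,p}_{\delta-1}$ and the weight $(\delta-1)+(\delta-1)\leq -n$. Lowering and raising indices with $g$ rather than $\mathring g$ costs another term $(g-\mathring g)*\onabla X+\onabla g*X$, which is also $L^1$ by the same weight count. Hence, on the exterior region,
\[
R(g)+2\mathrm{div}(X)=\onabla^j\big(\onabla^i g_{ij}-\onabla_j(\mathring g^{ik}g_{ik})+2X_j\big)+\tilde Q,\qquad \tilde Q\in L^1.
\]

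Now apply the divergence theorem with respect to $\mathring g$, the flat metric, on the annulus $A_{r_1,r_2}=\{r_1<|x|<r_2\}$. The difference between the surface integrals at $r_2$ and $r_1$ equals $\int_{A_{r_1,r_2}}(R(g)+2\mathrm{div}X-\tilde Q)\,d\mathring V$. The densities $dV_g$ and $d\mathring V$ are uniformly comparable, so $R(g)+2\mathrm{div}(X)\in L^1(dV_g)$ implies integrability with respect to $d\mathring V$. The integrand is therefore in $L^1$, the right-hand side tends to $0$ as $r_1,r_2\to\infty$, and the limit defining $\m_X(g)$ is Cauchy and hence finite. Regularity is not an obstacle: $W^{2,p}$ with $p>n/2$ gives traces of first derivatives on spheres. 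If needed, one can average over $r\in[\rho,2\rho]$, as Bartnik does, to make sense of the boundary terms for a.e.\ $r$, and the limit along all spheres then follows.

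The main obstacle is the bookkeeping showing that every error term is integrable under only $W^{2,p}_\delta$ control with $p>n/2$. Bartnik's original argument handles this for $Q$; the new terms involving $X$ only require checking the weights in the weighted H\"older and Sobolev embeddings. One also wants independence of the chart $\psi$. That is not strictly required for "well-defined", but I would address it by noting that the $X$-term $\lim\int 2X_j\nu^j\,dS$ transforms trivially under changes of asymptotic coordinates that are asymptotically Euclidean motions. Bartnik's invariance argument then applies to the remaining part of the combination.
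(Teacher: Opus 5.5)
Your proof is correct, with one caveat below, and rests on the same mechanism as the paper: the flux through $S_r$ minus the bulk integral of the curvature-type quantity converges. The difference lies in how you handle the $X$-term.

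The paper does not redo Bartnik's computation. It cites the convergence of $\int_{S_r}(\onabla^ig_{ij}-\onabla_j(\mathring g^{ik}g_{ik}))\nu^j\,dS-\int_{B_r}R(g)\,dV$. It then adds $2X_j\nu^j$ to the flux and $2\mathrm{div}(X)$ to the bulk. This costs nothing because $\sqrt{\det g}\,\mathrm{div}_gX=\partial_j(\sqrt{\det g}\,X^j)$ holds exactly, so $\int_{B_r}\mathrm{div}(X)\,dV_g$ is the $g$-flux of $X$. That flux differs from the flat flux only by a surface term bounded by $C\int_{S_r}|g-\mathring g||X|\,dS$, which tends to $0$ by Lemma \ref{lem-boundtrace}.

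You instead expand $\mathrm{div}_gX$ around the flat divergence and put $\Gamma*X$, $(g-\mathring g)*\onabla X$ and $\onabla g*X$ into a bulk error that must be in $L^1$. This makes your argument self-contained, but less robust. Your justification ``the weights add to $2\delta-2\le-n$'' works when $\delta<-\frac{n-2}{2}$, or at the endpoint when $p\le2$. It fails at the endpoint $\delta=-\frac{n-2}{2}$ with $p>2$, which is forced when $n\ge4$. There, weighted H\"older cannot combine two factors in $L^p_{-n/2}$ into $L^1$. For instance, $r^{-n/2}(\log r)^{-a}$ with $\frac1p<a\le\frac12$ lies in $W^{k,p}_{-n/2}$ for every $k$, but its square is not integrable.

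The same caveat applies to your claim $Q\in L^1$, and to the standard proof of the ADM fact the paper cites. So this is not a defect relative to the paper's argument. For the $X$-terms, however, the paper's route avoids the issue altogether: Lemma \ref{lem-boundtrace} gives $o(r^{n+2\delta-2})=o(1)$ up to and including the endpoint. If you integrate against $dV_g$ and use the exact divergence identity above instead of expanding, your argument produces no bulk error involving $X$ at all.
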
 
\begin{proof}
	It is well-known that
	\begin{equation*}
		\lim_{r\to\infty}\left( \int_{S_r}\left( \onabla^ig_{ij}-\onabla_j\mathring (g^{ik}g_{ik})\right)\nu^j\,dS-\int_{B_r}R(g)\,dV\right)
	\end{equation*}
	is well-defined and finite for all asymptotically flat $g$. Simply by adding and subtracting $2\mathrm{div}(X)$ to the bulk integral gives
\begin{equation*}
	\lim_{r\to\infty}\left( \int_{S_r}\left( \onabla^ig_{ij}-\onabla_j(\mathring g^{ik}g_{ik})+2X_j\right)\nu^j\,dS-\int_{B_r}R(g)+2\mathrm{div}(X)\,dV\right).
\end{equation*}
This limit is still finite in general, so provided the bulk integral converges then so must the surface integral.
\end{proof}
\begin{rem}
	Geometric invariance of $\m_X$ follows by the same argument as geometric invariance of the ADM mass \cite{Bartnik86,Chrusciel1986}. However, we do not prove it directly as this is clearly seen to be inherited directly from the ADM mass via \eqref{eq-massdefect} below.
\end{rem}

Before proving the main theorem we give a brief lemma to justify discarding some terms at infinity in the calculation that follows.
\begin{lem}\label{lem-boundtrace}
	Given $v\in W^{2,p}_\delta$ and $u\in W^{1,p}_{\delta-1}$, we have 
	\begin{equation*}
		\lim_{r\to \infty} \int_{S_r} |vu|\,dS=0.
	\end{equation*}
\end{lem}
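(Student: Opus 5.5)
Since $v\in W^{2,p}_\delta$ with $p>n/2$, the weighted Sobolev embedding \cite[Theorem 1.2]{Bartnik86} gives $v\in C^0_\delta$, so $|v|=o(r^\delta)$ near infinity. This pointwise decay is the only information about $v$ the argument uses. The work is in controlling the surface integral of $u$, which is only in $W^{1,p}_{\delta-1}$ and need not be continuous when $p\le n$.

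The plan is to write $\int_{S_r}|vu|\,dS\le \sup_{S_r}|v|\int_{S_r}|u|\,dS$ and bound $\int_{S_r}|u|\,dS$ by a scaling argument on dyadic annuli. Let $A_r=\{r/2<|x|<2r\}$ and pull back by $x=ry$ to the fixed annulus $A=\{1/2<|y|<2\}$, setting $u_r(y)=u(ry)$. The trace inequality on $A$ gives $\int_{S_1}|u_r|\,dS_y\le C\|u_r\|_{W^{1,1}(A)}\le C\|u_r\|_{W^{1,p}(A)}$. Rescaling to the annulus $A_r$ yields
\[
\int_{S_r}|u|\,dS \le C r^{n-1}\Big(r^{-n}\int_{A_r}|u|\,dx+r^{1-n}\int_{A_r}|Du|\,dx\Big).
\]
By Hölder on $A_r$, whose volume is comparable to $r^n$, this becomes
\[
\int_{S_r}|u|\,dS\le C r^{n-1}r^{-n/p}\big(\|u\|_{L^p(A_r)}+r\|Du\|_{L^p(A_r)}\big).
\]
In the weighted norm of \cite{Bartnik86}, $\|u\|_{p,\delta-1;A_r}\approx r^{-(\delta-1)-n/p}\|u\|_{L^p(A_r)}$, and similarly with weight $\delta-2$ for $Du$. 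Hence
\[
\int_{S_r}|u|\,dS\le C r^{n-1+\delta-1}\|u\|_{1,p,\delta-1;A_r}=C r^{n-2+\delta}\,\|u\|_{1,p,\delta-1;A_r}.
\]

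Combining the two estimates gives
\[
\int_{S_r}|vu|\,dS\le C\Big(\sup_{S_r}|v|\,r^{-\delta}\Big)\, r^{n-2+2\delta}\,\|u\|_{1,p,\delta-1;A_r}.
\]
The first factor tends to $0$, since $v=o(r^\delta)$. The second factor is bounded because $\delta\le -\frac{n-2}{2}$. The third factor tends to $0$ because it is the tail of a finite weighted norm on annuli escaping to infinity. Therefore the limit is zero.

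The main obstacle is bookkeeping the weights correctly in the scaled trace inequality, so that the exponent comes out as $n-2+\delta$ for $u$. The extra factor $r^\delta$ from $v$ then lands exactly at the threshold $\delta\le-(n-2)/2$. If $v$ only had $O(r^\delta)$ rather than $o(r^\delta)$, the vanishing of the annular norm of $u$ would still close the argument. I would also check that the trace constant on $A$ is independent of $r$, which holds because every annulus is rescaled to the same fixed domain $A$ with the Euclidean metric.
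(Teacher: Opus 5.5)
Your proof is correct and follows the same route as the paper. You get $v=o(r^\delta)$ from the weighted embedding, use a rescaled trace inequality on a fixed annulus to obtain $\int_{S_r}|u|\,dS\le Cr^{n-2+\delta}\|u\|_{1,p,\delta-1;A_r}$, and close with $\delta\le-\frac{n-2}{2}$ and the vanishing of the annular norm. The differences are cosmetic: you place $S_r$ inside $\{r/2<|x|<2r\}$ where the paper takes it as the inner boundary of $\{r<|x|<2r\}$, and you spell out the H\"older step and the weight bookkeeping that the paper leaves implicit.
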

\begin{proof}
	This is essentially Lemmas 4.3 and 4.4 of \cite{Bartnik2005PhaseSpace}, but repeated for the spaces we use here. Recall that ranges of $\delta\in(-(n-2),-(n-2)/2]$ and $p>\frac{n}{2}$ are fixed, so in particular $v$ is H\"older continuous and $o(r^\delta)$, and therefore 
	\begin{equation}\label{eq-uv}
		\int_{S_r} |vu|\,dS=o(r^{\delta})\int_{S_r} |u|\,dS.
	\end{equation}

	Writing $A_r=\{r<|x|<2r\}$ and the usual rescaling argument (cf. \cite[Theorem 1.2]{Bartnik86}) combined with the Sobolev trace theorem on $A_1$ gives
	\begin{equation*}
		\|u\|_{1;S_r}= r^{n-1}\| u_r\|_{1;S_1} \leq C r^{n-1}\|u_r\|_{1,p;A_1}\leq C r^{n-1}r^{\delta-1}\|u\|_{1,p,\delta-1;A_r},
	\end{equation*}
	where $u_r(x)=u(rx)$. Noting that $\|u\|_{1,p,\delta-1;A_r}=o(1)$, combining this with \eqref{eq-uv} gives
	\begin{equation*}
		\int_{S_r} |vu|\,dS=o(r^{n+2\delta-2})
	\end{equation*}
	and since $\delta\leq -(n-2)/2$ by assumption, we are done.\end{proof}
\begin{thm}\label{thm-main1}
	Let $(M,g)$ be a smooth asymptotically flat manifold and $X\in W^{2,p}_{\delta-1}$ a vector field on $M$ satisfying
	\begin{equation} 
		R(g)+ 2\mathrm{div}(X)\geq\frac{n-2}{n-1}|X|^2,
	\end{equation}
	and $R(g)+ 2\mathrm{div}(X)\in L^1$.
	
	Then $\m_X(g)\geq0$ with equality if and only if $g=u^{4/(n-2)}g_{\mathbb R^n}$, $M\cong\mathbb R^n$, and $X=\nabla(\ln(u^{\frac{2(n-1)}{n-2}}))$, where $g_{\mathbb R^n}$ is (isometric to) the Euclidean metric.
\end{thm}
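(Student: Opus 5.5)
By Proposition \ref{prop-Yamabe}, $g$ is of positive Yamabe type. The existence result of Maxwell (via Cantor--Brill) therefore gives a positive solution $\varphi$ with $\varphi-1\in W^{2,p}_\delta$ of
\begin{equation*}
	-c_n\Delta\varphi+R(g)\varphi=0,\qquad c_n=\tfrac{4(n-1)}{n-2},
\end{equation*}
so that $\tilde g=\varphi^{4/(n-2)}g$ is asymptotically flat and scalar flat. I will compare masses using the standard asymptotic expansion of the ADM mass under a conformal change. Since $\varphi\to1$, we have
\begin{equation*}
	\m(\tilde g)=\m(g)-2c_n\lim_{r\to\infty}\int_{S_r}\partial_\nu\varphi\,dS ,
\end{equation*}
where the constant follows from $\tilde g_{ij}\approx(1+\tfrac{4}{n-2}(\varphi-1))g_{ij}$. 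This gives $\partial_i\tilde g_{ij}-\partial_j\tilde g_{ii}\approx(\ldots)-\tfrac{4(n-1)}{n-2}\partial_j\varphi$, so the coefficient is $c_n$; I will fix it in the computation. Because $\m(g)$ may not exist on its own, I will work throughout with the combined flux integrands on $S_r$ and only take limits at the end.

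The key identity comes from integrating over $B_r$ and using the equation $c_n\Delta\varphi=R\varphi$. Write $R_X=R+2\mathrm{div}X-\frac{n-2}{n-1}|X|^2\ge0$. Completing the square gives
\begin{equation*}
	c_n\Big|\nabla\varphi+\tfrac{2}{c_n}X\varphi\Big|^2+R_X\varphi^2=c_n|\nabla\varphi|^2+4\varphi X\!\cdot\!\nabla\varphi+R\varphi^2+2\,\mathrm{div}(X)\varphi^2 .
\end{equation*}
Here I use $\frac{4}{c_n}=\frac{n-2}{n-1}$. The right-hand side equals $\mathrm{div}\big(c_n\varphi\nabla\varphi+2\varphi^2X\big)$, using $R\varphi^2=c_n\varphi\Delta\varphi$. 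Integrating over $B_r$ therefore gives
\begin{equation*}
	\int_{B_r}c_n\Big|\nabla\varphi+\tfrac{2}{c_n}X\varphi\Big|^2+R_X\varphi^2\,dV=\int_{S_r}\big(c_n\varphi\partial_\nu\varphi+2\varphi^2X\cdot\nu\big)\,dS .
\end{equation*}
Now write $\varphi=1+v$ with $v\in W^{2,p}_\delta$, and note $\nabla\varphi, X\in W^{1,p}_{\delta-1}$. Lemma \ref{lem-boundtrace} kills the cross terms $v\partial_\nu\varphi$, $vX\cdot\nu$ and $v^2X\cdot\nu$ in the limit, so the right side tends to
\begin{equation*}
	\lim_{r\to\infty}\int_{S_r}\big(c_n\partial_\nu\varphi+2X\cdot\nu\big)\,dS .
\end{equation*}
The conformal mass formula should have its factor arranged so that adding this flux to $\m(\tilde g)$ produces exactly $\m_X(g)$. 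If the constant comes out as $2c_n$ rather than $c_n$, I will adjust by using $\varphi^2$-type weights or rescale; checking this normalisation against \eqref{eq-massdefect} is routine. The outcome is
\begin{equation*}
	\m_X(g)=\m(\tilde g)+\int_M c_n\Big|\nabla\varphi+\tfrac{2}{c_n}X\varphi\Big|^2+R_X\varphi^2\,dV .
\end{equation*}
This holds with the integral converging, and the convergence of the flux also shows that $\m(\tilde g)$ is finite even though $\m(g)$ may not be.

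Positivity now follows at once: $\m(\tilde g)\ge0$ by the positive mass theorem (Brendle--Wang in all dimensions) applied to the scalar-flat $\tilde g$, and the integrand is nonnegative. For equality, both terms must vanish. First, $\m(\tilde g)=0$ forces $(M,\tilde g)\cong\mathbb R^n$, so $g=u^{4/(n-2)}g_{\mathbb R^n}$ with $u=\varphi^{-1}$. Second, $\nabla\varphi=-\frac{2}{c_n}X\varphi$, so $X=-\frac{c_n}{2}\nabla\ln\varphi=\nabla\ln u^{2(n-1)/(n-2)}$. Conversely, for such $g$ and $X$ the identity shows $\m_X=0$, since the square term vanishes and $R_X=0$ by direct computation: $R(g)=-c_nu^{-(n+2)/(n-2)}\Delta_0u$, which combines with $\mathrm{div}X$ to cancel.

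The main obstacle is the bookkeeping at infinity. I need the precise constant in the conformal mass change, and I need to justify that every discarded boundary term actually vanishes under the weak regularity $W^{2,p}_\delta$, which is where Lemma \ref{lem-boundtrace} and $\delta\le-(n-2)/2$ are used. This has to be done without ever assuming that $\m(g)$ or $\int R$ exist separately.
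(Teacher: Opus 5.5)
Your proposal is correct and follows the paper's proof essentially step for step. The shared steps are: conformal reduction to a scalar-flat metric via Proposition \ref{prop-Yamabe} and Maxwell's result; the same completed-square identity (your pointwise form $\mathrm{div}(c_n\varphi\nabla\varphi+2\varphi^2X)$ is exactly the paper's integration by parts); Lemma \ref{lem-boundtrace} to replace $\varphi$ by $1$ in the fluxes; and rigidity of the classical positive mass theorem for the equality case.

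The only slip is your initial display with $2c_n$. Your own linearisation correctly gives the coefficient $c_n$, which is exactly what makes the fluxes combine into $\m_X(g)$, so no ``adjustment'' is needed. Your check of the converse (that $R_X=0$ and the square term vanish for $\varphi=u^{-1}$) is a small addition that the paper leaves implicit.
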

\begin{proof}
	By Proposition \ref{prop-Yamabe} and the resolution of the Yamabe problem in this setting \cite{Maxwell2005ApparentHorizonBoundaries}, there exists an asymptotically flat metric $\widetilde g=\varphi^{4/(n-2)}g$ with $R(\widetilde g)\equiv 0$ and $\varphi-1\in W^{2,p}_\delta\cap C^\infty$. Scalar curvature under a conformal change satisfies
	\begin{equation}\label{eq-Rconfchange}
		0=R(\widetilde g)=\varphi^{-\frac{n+2}{n-2}}\left( -c_n\Delta_g\varphi+R(g)\varphi \right),
	\end{equation}
	where $c_n=\frac{4(n-1)}{n-2}$. The standard formula for the change of ADM mass under a conformal transformation gives
	\begin{equation}\label{eq-masschange}
		\m(\widetilde g)=\lim_{r\to \infty}\int_{S_r} \left(\onabla^i g_{ij}-\onabla_j(\mathring g^{ik}g_{ik})-c_n\nabla_j\varphi\right)\nu^j\,dS\geq0.
	\end{equation}
	Note that since $R(\widetilde g)\equiv0$, $\m(\widetilde g)$ is well-defined and finite even when $\m(g)$ is not. In what follows, it will be convenient to write
	\begin{equation*}
		R_X(g)=	R(g)+ 2\mathrm{div}(X)-\frac{n-2}{n-1}|X|^2.
	\end{equation*}
	The key to the argument is to compare $\partial\varphi$ to $X$, which we do by integrating \eqref{eq-Rconfchange} multiplied by $\varphi^{\frac{2n}{n-2}}$. Working for the moment on a large ball $B_r$, we have
	\begin{align*}
		0&=\int_{B_r}-\varphi c_n\Delta_g\varphi+R(g)\varphi^2\,dV_g,
\end{align*}
which we can integrate by parts to obtain
\begin{align*}	
		c_n\int_{S_r}\varphi&\nabla_i\varphi \nu^i\,dS=\int_{B_r}c_n|\nabla\varphi|^2+R(g)\varphi^2\,dV\nonumber\\
		&= \int_{B_r}c_n|\nabla\varphi|^2-2\mathrm{div}(X)\varphi^2+ \frac{n-2}{n-1}|X|^2\varphi^2+R_X(g)\varphi^2\,dV\nonumber\\
		&= \int_{S_r}-2\varphi^2 X_i\nu^i\,dS+\int_{B_r}c_n\left| \nabla\varphi+\frac{2}{c_n}X\varphi\right|^2+R_X(g)\varphi^2\,dV.
	\end{align*}

Taking the limit, we want to replace $\varphi$ with $1$ in the surface integrals. Since $(\varphi-1),(\varphi^2-1)\in W^{2,p}_\delta$ and $\nabla\varphi,X\in W^{1,p}_{\delta-1}$, this is precisely what Lemma \ref{lem-boundtrace} allows us to do.

In particular, we have
\begin{align}
	\m_X(g)=\,&\,\lim_{r\to \infty}\int_{S_r} \left(\onabla^i g_{ij}-\onabla_j(\mathring g^{ik}g_{ik})+2X_j\right)\nu^j\,dS\nonumber\\\nonumber
	=\,&\, 	\lim_{r\to \infty}\int_{S_r} \left(\onabla^i g_{ij}-\onabla_j(\mathring g^{ik}g_{ik})-c_n\nabla_j\varphi\right)\nu^j\,dS\\\nonumber
	&+\int_{B_r} c_n\left| \nabla\varphi+\frac{2}{c_n}X\varphi\right|^2+R_X(g)\varphi^2\,dV\\
	=\,&\,\m(\widetilde g)+\int_{M} c_n\left| \nabla\varphi+\frac{2}{c_n}X\varphi\right|^2+R_X(g)\varphi^2\,dV,\label{eq-massdefect}
\end{align}
where $R_X(g)\in L^1$ by assumption and the decay for $\nabla\varphi$ and $X$ are precisely what is required to ensure the remaining term is finite. We see immediately that $R_X(g)\geq0$ implies $\m_X(g)\geq0$. Furthermore, in the case of equality, $R_X(g)=0$, $\m(\widetilde g)=0$ and
	\begin{equation}
		X=-\frac{c_n}{2\varphi}\nabla\varphi=\nabla f,
	\end{equation}
	where $f=\ln(\varphi^{-\frac{c_n}{2}})$, and by the rigidity of the standard positive mass theorem $\widetilde g=g_{\mathbb R^n}$.
\end{proof}
\begin{rem}
	Theorem \ref{thm-main1} (as well as Corollary \ref{cor-charge} and Theorem \ref{thm-boundary}) assumes $g$ is smooth so that $\varphi$ inherits that smoothness and therefore $\widetilde g$ is also a smooth metric. We only however need to ensure enough smoothness to apply the positive mass theorem to $(M,\widetilde g)$, so the result only requires $g$ be regular enough to ensure the conformal metric meets the requirements of the positive mass theorem to be applied.
\end{rem}

By applying Theorem \ref{thm-main1} to $X=\pm (n-1)E$, where $E$ is the electric field in the context of Einstein--Maxwell initial data we obtain the following immediate corollary.
\begin{cor}[Theorem \ref{thm-intromasscharge}]\label{cor-charge}
 Let $(M,g)$ be a smooth asymptotically flat manifold and $E\in W^{2,p}_{\delta-1}$ a vector field on $M$ satisfying
 \begin{equation*}
 	R(g)\geq 2(n-1)|\mathrm{div}(E)|+(n-1)(n-2)|E|^2,
 \end{equation*}
 and $R(g),|\mathrm{div}(E)|\in L^1$.
 
 Then
 \begin{equation*}
 	\m(g)\geq |Q(E)|,
 \end{equation*}	
 where $\m$ and $Q$ are given by \eqref{eq-mE}.
\end{cor}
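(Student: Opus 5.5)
We apply Theorem \ref{thm-main1} twice, once with $X=(n-1)E$ and once with $X=-(n-1)E$.

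\emph{Hypotheses.} First we check that both choices satisfy the assumptions. Since $E\in W^{2,p}_{\delta-1}$, so is $X=\pm(n-1)E$. We have
\begin{equation*}
	\frac{n-2}{n-1}|X|^2=(n-1)(n-2)|E|^2
\end{equation*}
and
\begin{equation*}
	2\,\mathrm{div}(X)=\pm 2(n-1)\mathrm{div}(E)\geq -2(n-1)|\mathrm{div}(E)|.
\end{equation*}
The hypothesis on $R(g)$ therefore gives
\begin{equation*}
	R(g)+2\,\mathrm{div}(X)\geq R(g)-2(n-1)|\mathrm{div}(E)|\geq \frac{n-2}{n-1}|X|^2
\end{equation*}
for either sign. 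Integrability also holds: $R(g)\in L^1$ and $|\mathrm{div}(E)|\in L^1$ give $R(g)+2\,\mathrm{div}(X)\in L^1$. Theorem \ref{thm-main1} then yields $\m_{\pm(n-1)E}(g)\geq 0$.

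\emph{Splitting the mass.} Next we split $\m_X$ into $\m(g)$ plus the charge. Since $R(g)\in L^1$, the standard result of Bartnik quoted before Proposition \ref{prop-welldefmE} shows that $\m(g)$ is well-defined and finite. The proof of Proposition \ref{prop-welldefmE} shows that the limit defining $\m_X(g)$ exists. Subtracting the two convergent limits, the limit of $\int_{S_r}2X_j\nu^j\,dS$ exists. Equivalently, by the divergence theorem over $B_r$, this flux equals $2(n-1)\int_{B_r}\mathrm{div}(E)\,dV$ up to a fixed interior boundary term, and this converges because $\mathrm{div}(E)\in L^1$. Hence $Q(E)$ is well-defined, and
\begin{equation*}
	\m_{\pm(n-1)E}(g)=\m(g)\pm\lim_{r\to\infty}\int_{S_r}2(n-1)E_j\nu^j\,dS=\m(g)\pm Q(E).
\end{equation*}

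\emph{Conclusion and main subtlety.} Both nonnegativity statements give $\m(g)\geq \pm Q(E)$, that is, $\m(g)\geq|Q(E)|$. The only delicate point is the splitting step: the surface integrals must be taken with the same $\nu$ and $dS$ conventions in $\m$, $Q$ and $\m_X$. We also need that the existence of the individual limits follows from the two integrability assumptions rather than being assumed. This is handled by the argument above. No further analysis is needed, since the existence of the conformal factor and all decay issues are already contained in Theorem \ref{thm-main1}.
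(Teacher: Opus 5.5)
Your proposal is correct and follows the paper's route exactly. You apply Theorem \ref{thm-main1} with $X=\pm(n-1)E$, check that the hypothesis on $R(g)$ gives $R(g)+2\mathrm{div}(X)\geq\frac{n-2}{n-1}|X|^2$ with $R(g)+2\mathrm{div}(X)\in L^1$ for both signs, and read off $\m(g)\pm Q(E)\geq 0$; the paper calls this immediate, and your justification of the splitting $\m_{\pm(n-1)E}(g)=\m(g)\pm Q(E)$ (finiteness of $\m(g)$ from $R(g)\in L^1$, and of $\m_X(g)$ from Proposition \ref{prop-welldefmE}) supplies the details it leaves implicit.
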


\section{X-positive mass theorem for manifolds with boundary}
Throughout, we have assumed that $M$ has no boundary for simplicity and the sake of presentation. However, the Yamabe problem is well understood in the case that $M$ has a compact boundary so the argument carries through almost identically provided that the boundary satisfies a kind of flux condition for the X-ADM mass. We present it separately here in this section for the sake of exposition.

\begin{thm}\label{thm-boundary}
	Let $(M,g)$ be an asymptotically flat manifold with non-empty compact boundary $\partial M$, and $X\in W^{2,p}_{\delta-1}$ a vector field on $M$ satisfying
	\begin{equation} 
		R(g)+ 2\mathrm{div}(X)\geq\frac{n-2}{n-1}|X|^2,
	\end{equation}
	and $R(g)+ 2\mathrm{div}(X)\in L^1$.
	
	Assume additionally that
	\begin{equation}\label{eq-boundcond}
		H\leq X\cdot\nu\,\text{ on }\partial M
	\end{equation}
	where $H$ is the mean curvature of $\partial M$ with respect to the inward (pointing towards infinity) unit normal $\nu$,
	then $\m_X(g)>0$.
\end{thm}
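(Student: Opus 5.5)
The plan is to mimic the proof of Theorem \ref{thm-main1}, now keeping track of the boundary terms. First we need a conformal factor. We solve for $\varphi>0$ with $\varphi-1\in W^{2,p}_\delta$ and
\begin{equation*}
-c_n\Delta_g\varphi+R(g)\varphi=0 \text{ in } M,\qquad \frac{2(n-1)}{n-2}\partial_\nu\varphi+H\varphi=0\text{ on }\partial M .
\end{equation*}
Then $\widetilde g=\varphi^{4/(n-2)}g$ is scalar flat with minimal boundary, since the conformal mean curvature is $\widetilde H=\varphi^{-n/(n-2)}\big(\tfrac{2(n-1)}{n-2}\partial_\nu\varphi+H\varphi\big)$ (with the sign fixed by the convention that $\nu$ points toward infinity). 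Solvability requires positivity of the boundary Yamabe-type quadratic form
\begin{equation*}
\int_M c_n|\nabla f|^2+R(g)f^2\,dV-2\int_{\partial M}Hf^2\,dA ,
\end{equation*}
for $f$ not necessarily vanishing on $\partial M$. The sign of the boundary term should be checked carefully against the orientation of $\nu$. This is the setting of Maxwell's boundary version of the Yamabe result. To check positivity I repeat the computation in Proposition \ref{prop-Yamabe}. Integrating $-2\mathrm{div}(X)f^2$ by parts now produces an extra boundary term $+2\int_{\partial M}(X\cdot\nu)f^2$, up to the orientation sign. Hence the form is bounded below by
\begin{equation*}
\int_M c_n\Big|\nabla f+\tfrac{2}{c_n}Xf\Big|^2+R_X(g)f^2\,dV+2\int_{\partial M}(X\cdot\nu-H)f^2\,dA .
\end{equation*}
Condition \eqref{eq-boundcond} makes the boundary term nonnegative. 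Coercivity then follows from Proposition \ref{prop-coerce}, since the paper notes that Lemma \ref{lem-ker} holds with boundary (use half balls). The contradiction argument of Proposition \ref{prop-coerce} transfers verbatim.

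Next comes the mass identity. Multiply the equation for $\varphi$ by $\varphi$ and integrate over $B_r\cap M$. The inner boundary contributes $\int_{\partial M}c_n\varphi\partial_\nu\varphi=-2\int_{\partial M}H\varphi^2$. Integrating $\mathrm{div}(X)\varphi^2$ by parts contributes $\int_{\partial M}2(X\cdot\nu)\varphi^2$. Handling the terms at infinity with Lemma \ref{lem-boundtrace} exactly as in Theorem \ref{thm-main1}, I expect
\begin{equation*}
\m_X(g)=\m(\widetilde g)+\int_M c_n\Big|\nabla\varphi+\tfrac{2}{c_n}X\varphi\Big|^2+R_X(g)\varphi^2\,dV+2\int_{\partial M}(X\cdot\nu-H)\varphi^2\,dA .
\end{equation*}
All terms after $\m(\widetilde g)$ are nonnegative.

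Finally, $(M,\widetilde g)$ is asymptotically flat and scalar flat, with nonempty minimal boundary. The positive mass theorem with minimal boundary, i.e.\ the Riemannian Penrose-type bound in the form $\m>0$ for a nonempty horizon, gives $\m(\widetilde g)>0$. Alternatively, one can double or fill across the boundary and invoke the rigidity case to rule out $\m(\widetilde g)=0$, since Euclidean space has no compact minimal hypersurface. This yields the strict inequality $\m_X(g)>0$.

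I expect the main obstacle to be the existence step: justifying the boundary Yamabe result together with the coercivity including boundary terms, and getting all the boundary orientation signs consistent. A second delicate point is invoking strict positivity of mass for a minimal boundary in all dimensions. For that I would cite the positive mass theorem with boundary, which is deducible from Brendle--Wang's theorem by reflecting across the minimal boundary and smoothing.
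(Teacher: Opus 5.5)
Your proposal is correct and follows the paper's proof essentially step for step. The steps match in order:
\begin{itemize}
\item positivity of Maxwell's boundary Yamabe functional, via the same completing-the-square computation, where the extra flux term $2\int_{\partial M}(X\cdot\nu)f^2$ dominates $-2\int_{\partial M}Hf^2$ by the hypothesis $H\le X\cdot\nu$;
\item the conformal factor that makes $\partial M$ minimal for $\widetilde g$;
\item the identity $\m_X(g)=\m(\widetilde g)+\int_M c_n|\nabla\varphi+\frac{2}{c_n}X\varphi|^2+R_X(g)\varphi^2\,dV+2\int_{\partial M}(X\cdot\nu-H)\varphi^2\,dS$;
\item strict positivity from the positive mass theorem with non-empty minimal boundary.
\end{itemize}
Your signs all check out. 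Your justification of strictness (doubling across the minimal boundary, or excluding the Euclidean rigidity case) is slightly more explicit than the paper's one-line appeal.
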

\begin{proof}
	The Yamabe constant in this case is given by \cite{Maxwell2005ApparentHorizonBoundaries}
	\begin{equation}
		\mathcal Y([g])=\inf_{f\in C^\infty_c(\overline M),f\not\equiv0}\frac{\int_M \frac{4(n-1)}{n-2}|\nabla f|^2+R(g)f^2\,dV_g-2\int_{\partial M}Hf^2\,dS_g}{\|f\|^2_{L^{2n/(n-2)}}}.
	\end{equation}
	Essentially the same calculation as in the proof of Proposition \ref{prop-Yamabe} gives for any $f\in C^\infty_c$,
	\begin{align*}
		\int_M &\frac{4(n-1)}{n-2}|\nabla f|^2+R(g)f^2\,dV_g\geq \\ &\geq \int_M \frac{4(n-1)}{n-2}|\nabla f|^2-\left( 2\mathrm{div}(X)-\frac{n-2}{n-1}|X|^2 \right)f^2\,dV\\&=\int_M \frac{4(n-1)}{n-2}\left|\nabla f +\frac{n-2}{2(n-1)}Xf   \right|^2\,dV+\int_{\partial M} 2f^2X_i\nu^i\,dS_g,
	\end{align*}
	where the only difference is an additional boundary term on $\partial M$ arising from the divergence theorem. From Proposition \ref{prop-coerce} and \eqref{eq-boundcond} we get $\mathcal Y([g])>0$ again.
	
	By Proposition 4.1 of \cite{Maxwell2005ApparentHorizonBoundaries}, there exists $\varphi>0$ with $(\varphi-1)\in W^{2,p}_\delta$ such that $\widetilde g=\varphi^{4/(n-2)}g$ is scalar flat and $\partial M$ is a minimal surface with respect to $\widetilde g$.
	
	Following the proof of Theorem \ref{thm-main1}, we have
	\begin{align*}
		0=\,&\,\int_{B_r}-\varphi c_n\Delta_g\varphi+R(g)\varphi^2\,dV_g,
	\end{align*}
	which gives
		\begin{align*}
		c_n\int_{S_r}&\varphi\nabla_i\varphi \nu^i\,dS-c_n\int_{\partial M}\varphi\nabla_i\varphi \nu^i\,dS=\int_{B_r}c_n|\nabla\varphi|^2+R(g)\varphi^2\,dV\nonumber\\
		=\,&\, \int_{B_r}c_n|\nabla\varphi|^2-2\mathrm{div}(X)\varphi^2+ \frac{n-2}{n-1}|X|^2\varphi^2+R_X(g)\varphi^2\,dV\\
		=\,&\, 2\int_{\partial M}\varphi^2 X_i\nu^i\,dS_g-2\int_{S_r}\varphi^2 X_i\nu^i\,dS\\
		&+\int_{B_r}c_n\left| \nabla\varphi+\frac{2}{c_n}X\varphi\right|^2+R_X(g)\varphi^2\,dV,
	\end{align*}
	and then making use of the change of mean curvature formula for conformal transformations 
	\begin{equation}\label{eq-confH}
			0=H_{\widetilde g}=\varphi^{-\frac{n}{n-2}}\left( \frac{2(n-1)}{n-2}\nabla_i\varphi\nu^i+H\varphi \right),
	\end{equation}
	we have
	\begin{align*}
		c_n\int_{S_r}\varphi\nabla_i\varphi \nu^i\,dS=\,&\,-2\int_{S_r}\varphi^2 X_i\nu^i\,dS+2\int_{\partial M} (X_i\nu^i-H)\varphi^2\,dS_g\\
		& +\int_{B_r}c_n\left| \nabla\varphi+\frac{2}{c_n}X\varphi\right|^2+R_X(g)\varphi^2\,dV.
	\end{align*}
 	Continuing as in the proof of Theorem \ref{thm-main1} we arrive at
	\begin{align*}
		\m_X(g)=\,&\,\m(\widetilde g)+\int_{M} c_n\left| \nabla\varphi+\frac{2}{c_n}X\varphi\right|^2+R_X(g)\varphi^2\,dV\\
		&+2\int_{\partial M}(X_i\nu^i-H)\varphi^2\,dS_g.
	\end{align*}
 We again obtain $\m_X(g)\geq \m(\widetilde g)$, and since $\partial M$ is a minimal surface boundary for $(M,\widetilde g)$ the positive mass theorem applies to $(M,\widetilde g)$. In fact, since $\partial M$ is non-empty, the rigidity case cannot hold and therefore we conclude strict positivity.
   
\end{proof}

\bibliographystyle{abbrv}
\bibliography{Refs}

\end{document}